\numberwithin{equation}{section}
\newtheorem{Theorem}{Theorem}[section]
\newtheorem{Corollary}[Theorem]{Corollary}
\newtheorem{Lemma}[Theorem]{Lemma}
\begin{document}

\allowdisplaybreaks

\newcommand{\arXivNumber}{1701.08960}

\renewcommand{\thefootnote}{}

\renewcommand{\PaperNumber}{037}

\FirstPageHeading

\ShortArticleName{Gustafson--Rakha-Type Elliptic Hypergeometric Series}

\ArticleName{Gustafson--Rakha-Type Elliptic Hypergeometric\\ Series\footnote{This paper is a~contribution to the Special Issue on Elliptic Hypergeometric Functions and Their Applications. The full collection is available at \href{https://www.emis.de/journals/SIGMA/EHF2017.html}{https://www.emis.de/journals/SIGMA/EHF2017.html}}}

\Author{Hjalmar ROSENGREN}

\AuthorNameForHeading{H.~Rosengren}

\Address{Department of Mathematical Sciences, Chalmers University of Technology and\\ University of Gothenburg, SE-412 96 Gothenburg, Sweden} \Email{\href{mailto:hjalmar@chalmers.se}{hjalmar@chalmers.se}}
\URLaddress{\url{http://www.math.chalmers.se/~hjalmar/}}

\ArticleDates{Received February 02, 2017, in f\/inal form May 29, 2017; Published online June 01, 2017}

\Abstract{We prove a multivariable elliptic extension of Jackson's summation formula conjectured by Spiridonov. The trigonometric limit case of this result is due to Gustafson and Rakha. As applications, we obtain two further multivariable elliptic Jackson summations and two multivariable elliptic Bailey transformations. The latter four results are all new even in the trigonometric case.}

\Keywords{elliptic hypergeometric series; multivariable hypergeometric series; Jackson summation; Bailey transformation}

\Classification{33D67}

\renewcommand{\thefootnote}{\arabic{footnote}}
\setcounter{footnote}{0}

\section{Introduction}

By combining two integral evaluations previously obtained by Gustafson \cite{gu},
Gustafson and Rakha \cite{gr} evaluated the basic hypergeometric integral
\begin{gather}\label{grb}\int\frac{\prod\limits_{1\leq i< j\leq n}(z_i/z_j)_\infty(z_j/z_i)_\infty\prod\limits_{j=1}^n(S/z_j)_\infty}
{\prod\limits_{1\leq i<j\leq n}(tz_iz_j)_\infty\prod\limits_{i=1}^n\left(\prod\limits_{j=1}^3(c_jz_i)_\infty\prod\limits_{j=1}^n(d_j/z_i)_\infty\right)}\frac{dz_1}{z_1}\dotsm\frac{dz_{n-1}}{z_{n-1}}, \end{gather}
where the integration is over $|z_1|=\dots=|z_{n-1}|=1$, $(z)_\infty=\prod\limits_{j=0}^\infty(1-q^j z)$, the parameters satisfy
\begin{gather*}|q|, |t|, |c_1|, |c_2|, |c_3|, |d_1|, \dots, |d_n|<1, \end{gather*}
$z_n$ is determined from the integration variables through $z_1\dotsm z_n=1$ and $S=t^{n-2}c_1c_2c_3d_1\dotsm d_n$. By applying residue calculus to~\eqref{grb}, they could evaluate a certain multivariable basic hypergeometric f\/inite sum, equivalent to the case $p=0$ of Theorem~\ref{grt} below.

Since the seminal work of Date et al.~\cite{d} and Frenkel and Turaev~\cite{ft}, it has been recognized that basic hypergeometric functions appear as the trigonometric limit of more general elliptic hypergeometric functions. Elliptic extensions of Gustafson's two integral evaluations mentioned above were conjectured in \cite{sp, ds} and proved in~\cite{ra}. Spiridonov~\cite{sp} used these (at the time conjectural) evaluations to obtain an elliptic extension of \eqref{grb}. He also stated the corresponding summation formula as a conjecture. Although it seems likely that this conjecture can be deduced from Spiridonov's integral, such a derivation is still missing from the literature. The purpose of the present paper is to give a direct proof of Spiridonov's conjectured summation and to apply it to derive some further summation and transformation formulas.

It is worth mentioning that Spiridonov's elliptic extension of~\eqref{grb} can be interpreted as the identity between superconformal indices of two dual quantum f\/ield theories \cite[Sections~12.1.2--12.1.3]{sv}. This indicates that~\eqref{grb} and related results are not mere curiosities and that it is not unreasonable to expect further applications.

The plan of the paper is as follows. In Section~\ref{ms}, we prove Spiridonov's conjecture. The proof is elementary and provides in particular a signif\/icant simplif\/ication of the trigonometric case. The only previously known proof of the Gustafson--Rakha summation is the original one, which as we recall is based on f\/irst proving two auxiliary multiple integral evaluations, combining them to obtain~\eqref{grb} and f\/inally on a technical computation to pass from integrals to f\/inite residue sums. In Section~\ref{ts}, we give some applications of our result. Namely, combining the elliptic Gustafson--Rakha sum with a summation from~\cite{rs}, we obtain two transformation formulas and two further summation formulas for multivariable elliptic hypergeometric series. These four results are all new even in the trigonometric case.

{\bf Note added in proof:} After completing this work, I learned from Masahiko Ito and Masatoshi Noumi that they have independently proved Theorem~\ref{grt}, using a dif\/ferent method.

\section{Preliminaries}

When $z=(z_1,\dots,z_n)$ is a vector we will write $|z|=z_1+\dots+z_n$ and $Z=z_1\dotsm z_n$.

Throughout, $p$ and $q$ will be f\/ixed parameters with $|p|<1$. We employ the standard notation
\begin{gather*}\theta(z)=\prod\limits_{j=0}^\infty\big(1-p^jz\big)\big(1-p^{j+1}/z\big), \\
(z)_k=\begin{cases}\theta(z)\theta(qz)\dotsm\theta\big(q^{k-1}z\big), & k\in\mathbb Z_{\geq 0},\\
1/\theta\big(q^kz\big)\theta\big(q^{k+1}z\big)\dotsm\theta\big(q^{-1}z\big), & k\in\mathbb Z_{<0}\end{cases} \end{gather*}
as well as
\begin{gather*}\theta(z_1,\dots,z_m)=\theta(z_1)\dotsm\theta(z_m), \qquad (z_1,\dots,z_m)_k=(z_1)_k\dotsm(z_m)_k.\end{gather*}
Most of our computations are based on the elementary identities
\begin{gather*}\theta(1/z)=\theta(pz)=-\theta(z)/z, \\
(a)_{n+k}=(a)_n(aq^n)_k,\qquad (a)_{n-k}=(-1)^kq^{\binom k2}(q^{1-n}/a)^k\frac{(a)_n}{(q^{1-n}/a)_k}, \end{gather*}
which will be used without comment. All our sums contain the $A$-type factor
\begin{gather*}\frac{\Delta(zq^x)}{\Delta(z)}=\prod\limits_{1\leq i<j\leq n}\frac{q^{x_i}\theta(q^{x_j-x_i}z_j/z_i)}{\theta(z_j/z_i)}, \end{gather*}
where $z=(z_1,\dots,z_n)$ and $x=(x_1,\dots,x_n)$ is the summation index. We mention the useful identity \cite[equation~(3.8)]{r}
\begin{gather}\label{etf}
\frac{\Delta(z q^x)}{\Delta(z)}=(-1)^{|x|}q^{-\binom{|x|}2-|x|}\prod\limits_{i,j=1}^n\frac{(qz_i/z_j)_{x_i}}{(q^{-x_j}z_i/z_j)_{x_i}}
\end{gather}
and \cite[Example~20.53.3]{ww}
\begin{gather}\label{tpf}
\sum_{k=1}^n\frac{\prod\limits_{j=1}^{n+1}\theta(z_k/b_j)}{\theta(z_k/t)\prod\limits_{j=1,\, j\neq k}^n\theta(z_k/z_j)}=\frac{\prod\limits_{j=1}^{n+1}\theta(b_j/t)}{\prod\limits_{j=1}^n\theta(z_j/t)},
\end{gather}
valid for $tz_1\dotsm z_n=b_1\dotsm b_{n+1}$.

In the one-variable case, the most fundamental results for elliptic hypergeometric series are the
elliptic Jackson (or Frenkel--Turaev) summation
\begin{gather}\label{js}\sum_{x=0}^N\frac{\theta\big(aq^{2x}\big)}{\theta(a)}\frac{\big(a,b,c,d,e,q^{-N}\big)_x q^x}{\big(q,aq/b,aq/c,aq/d,aq/e,aq^{N+1}\big)_x}=\frac{(aq,aq/bc,aq/bd,aq/cd)_N}{(aq/b,aq/c,aq/d,aq/bcd)_N},\end{gather}
valid for $a^2q^{N+1}=bcde$, and the elliptic Bailey transformation
\begin{gather}\label{bat}\sum_{x=0}^N\frac{\theta\big(aq^{2x}\big)}{\theta(a)}\frac{\big(a,b,c,d,e,f,g,q^{-N}\big)_x q^x}{\big(q,aq/b,aq/c,aq/d,aq/e,aq/f,aq/g,aq^{N+1}\big)_x}\\
=\frac{(aq,aq/ef,\lambda q/e,\lambda q/f)_N}{(\lambda q,\lambda q/ef,aq/e,aq/f)_N} \sum_{x=0}^N\frac{\theta\big(\lambda q^{2x}\big)}{\theta(\lambda)}\frac{\big(\lambda,\lambda b/a,\lambda c/a,\lambda d/a,e,f,g,q^{-N}\big)_x\,q^x}{\big(q,aq/b,aq/c,aq/d,\lambda q/e,\lambda q/f,\lambda q/g,\lambda q^{N+1}\big)_x}\nonumber
\end{gather}
valid for $a^3q^{N+2}=bcdefg$, $\lambda =a^2q/bcd$ \cite{ft}. Numerous multivariable extensions of~\eqref{js} and~\eqref{bat} are known, see, e.g.,~\cite{cg,rbc,rw,r,rs1,rs,sw,w}; further examples are obtained in the present paper. We will need one such result, a multivariable extension of \eqref{js} obtained in~\cite{rs} (see~\cite{sc} for the case $p=0$). Namely, for $a^2q^{|N|+1}=bcde$,
\begin{gather}
\sum_{x_1,\dots,x_n=0}^{N_1,\dots,N_n}\frac{\Delta(zq^x)}{\Delta(z)}\frac{\theta\big(aq^{2|x|}\big)} {\theta(a)}\frac{(a,b,c)_{|x|}\prod\limits_{i=1}^n(d/z_i)_{|x|}}{\big(aq/b,aq/c,aq^{|N|+1}\big)_{|x|}
\prod\limits_{i=1}^n\big(aq^{|N|+1-N_i}/ez_i\big)_{|x|}}q^{|x|}\nonumber\\
\qquad\quad{}\times\prod\limits_{i=1}^n\frac{\big(aq^{|N|+1}/ez_i\big)_{|x|-x_i}(ez_i)_{x_i}\prod\limits_{j=1}^n\big(q^{-N_j}z_i/z_j\big)_{x_i}}
{(d/z_i)_{|x|-x_i}(aqz_i/d)_{x_i}\prod\limits_{j=1}^n(qz_i/z_j)_{x_i}}\nonumber\\
\qquad{}=\frac{(aq,aq/bc)_{|N|}}{(aq/b,aq/c)_{|N|}}\prod\limits_{i=1}^n\frac{(aqz_i/bd,aqz_i/cd)_{N_i}}{(aqz_i/d,aqz_i/bcd)_{N_i}}.\label{rsi}
\end{gather}

\section{The elliptic Gustafson--Rakha summation}\label{ms}

Our main result is the following identity. It is easy to see that the case $p=0$ is equivalent to \cite[Theorem~1.2]{gr} and the general case to the conjecture of \cite[p.~953]{sp}. Recall the notation $Z=z_1\dotsm z_n$.

\begin{Theorem}\label{grt}
For parameters subject to $q^{N-1}b_1\dotsm b_4z_1^2\dotsm z_n^2=1$,
\begin{gather}\sum_{\substack{x_1,\dots,x_n\geq 0,\\x_1+\dots+x_n=N}}
\frac{\Delta(zq^x)}{\Delta(z)}\prod\limits_{1\leq i<j\leq n}q^{x_ix_j}(z_iz_j)_{x_i+x_j}\prod\limits_{i=1}^n
\frac{\prod\limits_{j=1}^4(z_ib_j)_{x_i}}{z_i^{x_i}\prod\limits_{j=1}^n(qz_i/z_j)_{x_i}}\nonumber\\
\qquad{} =\begin{cases}
\displaystyle\frac{(Zb_1,Zb_2,Zb_3,Zb_4)_N}{Z^N(q)_N}, & n~\text{odd},\\[4mm]
\displaystyle\frac{(Z,Zb_1b_2,Zb_1b_3,Zb_1b_4)_N}{(Zb_1)^N(q)_N}, & n~\text{even}.
\end{cases}\label{gri}
\end{gather}
\end{Theorem}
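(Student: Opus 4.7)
My plan is to prove Theorem~\ref{grt} by induction on the dimension $n$.

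For the base case $n=1$, the constraint $|x|=N$ leaves only the single term $x_1=N$; with $Z=z_1$, the $A$-type factor and the cross-products $\prod_{1\leq i<j\leq n}$ are empty, and the summand reduces directly to the right-hand side in its $n$ odd form. The balance condition plays no further role beyond fixing the parameters.

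For the inductive step from $n-1$ to $n$, I would extract the last summation variable $x_n$ from the sum. The splitting $(z_iz_n)_{x_i+x_n}=(z_iz_n)_{x_n}(z_iz_nq^{x_n})_{x_i}$ separates the cross-product factors involving $z_n$, and the identity \eqref{etf} recasts the $A$-type factor in Pochhammer form so the $z_n$-dependence can likewise be isolated. The outcome is an outer sum over $x_n$ whose summand contains an inner sum over $x'=(x_1,\dots,x_{n-1})$ with $|x'|=N-x_n$, together with a coupling factor linking the two. The plan is to recognize this inner sum, after relabelling (with $z_nq^{x_n}$ playing the role of one of the $b_j$'s), as an instance of the $(n-1)$-dimensional Gustafson--Rakha sum. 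Applying the inductive hypothesis then collapses the inner sum and reduces the problem to a one-variable sum over $x_n$, which I would evaluate using the Frenkel--Turaev summation \eqref{js}. The parity alternation in the right-hand side is compatible with this scheme, since a single application of \eqref{js} naturally swaps between the two forms in \eqref{gri}.

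The main obstacle is that a naive extraction produces an inner sum with one more parameter per row than the Gustafson--Rakha shape allows: the numerator factor $(z_iz_nq^{x_n})_{x_i}$ from the split cross-products and the residual denominator factor $(qz_i/z_n)_{x_i}$ from $\prod_{j=1}^n(qz_i/z_j)_{x_i}$ together furnish an extra numerator and denominator for each $i<n$, giving effectively five ``$b$-type'' parameters rather than four. Reducing this excess --- likely via a preliminary application of the partial-fraction identity \eqref{tpf} or an intermediate use of the Rosengren--Schlosser summation \eqref{rsi} to absorb the extra parameter --- is the delicate heart of the argument. Once this reduction is achieved and the balance condition $q^{N-1}b_1\dotsm b_4z_1^2\dotsm z_n^2=1$ is tracked correctly through the substitutions, the remainder of the proof should follow routinely from \eqref{js}.
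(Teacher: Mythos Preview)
Your proposal has a genuine gap at precisely the point you yourself flag as ``the delicate heart of the argument''. After extracting $x_n$, the inner sum over $x'=(x_1,\dots,x_{n-1})$ with $|x'|=N-x_n$ carries, for each $i<n$, the four original numerator factors $(z_ib_j)_{x_i}$ together with the extra numerator $(z_iz_nq^{x_n})_{x_i}$ and the extra denominator $(qz_i/z_n)_{x_i}$. This is one level \emph{above} the Gustafson--Rakha shape: in the one-variable analogy it is a very-well-poised ${}_{10}\phi_9$, not an ${}_8\phi_7$, and such objects satisfy transformations (cf.\ \eqref{bat}), not closed-form summations. Neither of your suggested remedies resolves this. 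The partial-fraction identity \eqref{tpf} is a one-step theta identity and cannot by itself strip a full elliptic Pochhammer pair $(z_iz_nq^{x_n})_{x_i}/(qz_i/z_n)_{x_i}$ from every term of a multivariable sum; and the Rosengren--Schlosser summation \eqref{rsi} has a different architecture (factors depending on $|x|$ and $|x|-x_i$) that does not match what you obtain. As things stand you have reduced the problem to a harder one, and the induction on $n$ does not close.

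The paper avoids this obstruction by inducting on $N$ rather than on $n$. The base case $N=1$ is a one-line theta identity (the Lemma, equation \eqref{ib}), and the step $N\to N+1$ exploits the multiplicative structure $R_{N+1}=c\cdot R_1(q^NZ;q^{-N}b_1,\dots)\cdot R_N(Z;qb_1,\dots)$ of the right-hand side: one writes $R_{N+1}$ in this factored form, inserts the inductive hypothesis for $R_N$, expands $R_1$ termwise via \eqref{ib} with shifted arguments, and after the shift $x\mapsto x-y$ the resulting inner sum over $y$ with $|y|=1$ is \emph{exactly} an instance of the partial-fraction identity \eqref{tpf}. That is what makes the induction close; no excess parameter ever appears. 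If you wish to salvage an induction-on-$n$ argument you would first need a summation (not merely a transformation) for the five-parameter inner sum, and none is available.
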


We will prove Theorem \ref{grt} by induction on $N$. In the case $N=1$, we have $x_i=\delta_{ik}$ for some~$k$. Using $k$ as summation index,
Theorem~\ref{grt} reduces to the following theta function identity.

\begin{Lemma}
For parameters subject to $b_1b_2b_3b_4z_1^2\dotsm z_n^2=1$,
\begin{gather}\label{ib}\sum_{k=1}^n\frac{\prod\limits_{j=1}^4\theta(z_kb_j)}{z_k}\prod\limits_{j=1,\,j\neq k}^n\frac{\theta(z_kz_j)}{\theta(z_k/z_j)}
=\begin{cases}
\theta(Zb_1,Zb_2,Zb_3,Zb_4)/Z, & n~\text{odd},\\
\theta(Z,Zb_1b_2,Zb_1b_3,Zb_1b_4)/Zb_1, & n~\text{even}.
 \end{cases} \end{gather}
\end{Lemma}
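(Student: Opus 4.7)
The plan is to view both sides of \eqref{ib} as meromorphic functions of $z_1$, with $b_4=1/(b_1b_2b_3Z^2)$ determined by the balancing condition, and to invoke the uniqueness theorem for theta functions with prescribed transformation law.

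First I would check that the two sides share the same quasi-periodicity in $z_1$. Under $z_1\to pz_1$ (which forces $Z\to pZ$ and $b_4\to b_4/p^2$), a direct computation using $\theta(pz)=-\theta(z)/z$ and $\theta(z/p)=-z\theta(z)/p$ shows that each summand $T_k$ on the LHS is multiplied by $b_4^2/p^2$. The calculation differs slightly for $k=1$ (where the $b_4$-dependence enters through $\theta(z_1b_4)$) and for $k\ge 2$ (where it enters through $\theta(z_kb_4)$ together with the ratio $\theta(z_kz_1)/\theta(z_k/z_1)$), but both reduce to the same scalar after one use of the balancing condition. The same scalar arises on the RHS in either parity case.

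Second, I would analyze the pole structure of the LHS in $z_1$. Apart from $z_1=0$, the LHS has apparent simple poles at $z_1=z_jp^\ell$ ($j\ge 2$, $\ell\in\mathbb Z$), coming from $1/\theta(z_1/z_j)$ in $T_1$ and from $1/\theta(z_j/z_1)$ in $T_j$. Using the local expansion $\theta(1+\epsilon)\sim -\epsilon(p;p)_\infty^2$ together with $\theta(1/w)=-\theta(w)/w$ and $\theta(z_k/z_j)=-(z_k/z_j)\theta(z_j/z_k)$, the two residues can be put in a common form, whereupon the balancing condition makes them cancel. Hence the LHS is in fact meromorphic with poles only at $z_1=0\pmod{p^{\mathbb Z}}$, matching the RHS.

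Third, each side (up to a monomial prefactor in $z_1$) is a holomorphic theta function of $z_1$ with a common quasi-periodicity, and a zero count places both in the same four-dimensional space of such theta functions. It then suffices to check equality at four generic specializations of $z_1$: a convenient choice is to take $z_1$ so that some $\theta(z_1b_j)=0$, which kills most $T_k$'s on the LHS, leaving an identity that can be matched termwise with the corresponding factor on the RHS (and the parity dichotomy in the closed form of the RHS just reflects which specialization most cleanly reveals the match). As a sanity check, for $n=1$ the identity collapses to the tautology $\prod_j\theta(z_1b_j)/z_1=\theta(Zb_1,\dots,Zb_4)/Z$. The hard part is the residue-cancellation step, where the $k=1$ and $k=j$ summands look structurally quite different and have to be brought into comparable form, using $\theta(1/w)=-\theta(w)/w$ to absorb the awkward $z_1$-dependence of $b_4$, before the balancing condition can collapse them.
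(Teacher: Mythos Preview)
Your steps 1 and 2 are fine: the quasi-periodicity check and the residue cancellation at $z_1\in z_jp^{\mathbb Z}$ are correct, and they do place $z_1\cdot(\text{LHS}-\text{RHS})$ in the four-dimensional space of degree-$4$ theta functions of $z_1$.

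The gap is in step 3. Specializing $z_1$ so that $\theta(z_1b_j)=0$ does \emph{not} kill ``most $T_k$'s'': it kills only $T_1$. For $k\geq 2$ the factor $\theta(z_kb_j)$ is generically nonzero, so $T_k$ survives. At $z_1=1/b_1$, say, the $l=1$ factor in $T_k$ becomes $\theta(z_k/b_1)/\theta(z_kb_1)$, and after absorbing $\theta(z_kb_1)$ from the numerator you are left with
\[
\sum_{k=2}^n\frac{\theta(z_k\cdot b_1^{-1})\theta(z_kb_2)\theta(z_kb_3)\theta(z_kb_4)}{z_k}\prod_{l\geq 2,\,l\neq k}\frac{\theta(z_kz_l)}{\theta(z_k/z_l)},
\]
which is exactly the $(n-1)$-variable instance of the lemma (with parameters $b_1^{-1},b_2,b_3,b_4$ and the induced balancing). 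So your ``termwise match'' is in fact an appeal to the lemma one dimension down: the argument is inductive, and you have not said so. You also need to argue that your chosen evaluation points give linearly independent functionals on the four-dimensional space (equivalently, that a degree-$4$ theta function vanishing there must vanish identically), and with only three free $b_j$ you must locate a fourth independent point and justify it.

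For comparison, the paper's proof is also inductive but runs in a different variable. Writing $b_1=vw$, $b_2=v/w$, both sides are of the form $C\,\theta(aw,a/w)$ in $w$; this is a \emph{two}-dimensional space, with no poles to cancel, so two specializations suffice. Taking $b_1=1/z_n$ (i.e.\ $w=1/(vz_n)$) kills exactly the term $T_n$ and, after the same absorption you would do, reduces cleanly to the $(n-1)$-case. Your route can be made to work, but it is strictly more laborious: a larger space, an extra residue computation, and the same induction you would need anyway.
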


\begin{proof}
We apply induction on $n$, starting from the trivial case $n=1$. Let $b_1=vw$ and $b_2=v/w$, with $v$ and $w$ free parameters. As a function of $w$, each term in the sum, as well as the right-hand side, has the form $f(w)=C\theta(aw,a/w)$ with $C$ and $a$ independent of $w$. It is a~classical fact that any such function is determined by its values at two generic points. Indeed, Weierstrass' identity (which is equivalent to the case $n=2$ of~\eqref{tpf}) states that
\begin{gather*}f(w)=f(b)\frac{\theta(cw,c/w)}{\theta(cb,c/b)}+f(c)\frac{\theta(bw,b/w)}{\theta(bc,b/c)}, \end{gather*}
provided that $bc, b/c\notin p^{\mathbb Z}$. Thus, it suf\/f\/ices to verify~\eqref{ib} for two independent values of~$b_1$. Assuming $n\geq 2$, we choose $b_1=1/z_{n-1}$ and $b_1=1/z_n$. By symmetry, it is enough to consider the second case. Then, the term corresponding to $k=n$ cancels and we are reduced to an identity equivalent to \eqref{ib}, with $n$ replaced by $n-1$ and $b_1$ by $z_n$.
\end{proof}

We mention that it is not hard to deduce \eqref{ib} from classical theta function identities. Indeed, let $t=b_{n+1}$ in \eqref{tpf} (so that the right-hand side vanishes) and then make the substitutions \smash{$n\mapsto n+4$}, $(z_1,\dots,z_n)\mapsto (z_1,\dots,z_n,1,-1,\sqrt p,-1/\sqrt p)$, $(b_1,\dots,b_n)\mapsto(z_1^{-1},\dots,z_n^{-1},b_1^{-1},b_2^{-1}$, $b_3^{-1},b_4^{-1})$. Using the elementary identities $\theta(z^2)=\theta(z,-z,\sqrt pz,-\sqrt pz)$ and $2=\theta(-1,\sqrt p,-\sqrt p)$, one may deduce that the left-hand side of~\eqref{ib} is equal to
\begin{gather*}\frac 1 2\left({(-1)^{n+1}Z\prod\limits_{j=1}^4\theta(b_j)}+{Z\prod\limits_{j=1}^4\theta(-b_j)}+\frac 1{\sqrt p} {\prod\limits_{j=1}^4\theta\big(\sqrt pb_j\big)}
-\frac 1{\sqrt p} {\prod\limits_{j=1}^4\theta\big({-}\sqrt pb_j\big)}\right).
\end{gather*}
The fact that this equals the right-hand side of~\eqref{ib} follows from Jacobi's fundamental formulae \cite[Section~21.22]{ww}.

The inductive step in the proof of Theorem \ref{grt} is almost identical to that of \cite[Theorem~5.1]{r}. Denoting the right-hand side of~\eqref{gri} by
$R_N(Z;b_1,b_2,b_3,b_4)$ (where we for a moment consider~$Z$ as a free variable) we observe that, regardless of the parity of~$n$,
\begin{gather*}R_{N+1}(Z;b_1,b_2,b_3,b_4)=\frac{q^N\theta(q)}{\theta(q^{N+1})} R_1\big(q^NZ;q^{-N}b_1,b_2,b_3,b_4\big)R_N(Z;qb_1,b_2,b_3,b_4). \end{gather*}
Assuming \eqref{gri} for f\/ixed $N$, it follows that
\begin{gather*}R_{N+1}(Z;b_1,b_2,b_3,b_4)=\frac{q^N\theta(q)}{\theta(q^{N+1})} R_1\big(q^NZ;q^{-N}b_1,b_2,b_3,b_4\big)\\
\qquad{} \times \sum_{\substack{x_1,\dots,x_n\geq 0,\\x_1+\dots+x_n=N}}
\frac{\Delta(zq^x)}{\Delta(z)}\prod\limits_{1\leq i<j\leq n}q^{x_ix_j}(z_iz_j)_{x_i+x_j}\prod\limits_{i=1}^n
\frac{(qz_ib_1)_{x_i}\prod\limits_{j=2}^4(z_ib_j)_{x_i}}{z_i^{x_i}\prod\limits_{j=1}^n(qz_i/z_j)_{x_i}},
\end{gather*}
where $Z=z_1\dotsm z_n$ and $q^NBZ^2=1$. We pull the factor $R_1$ inside the sum and expand it using~\eqref{gri}, with~$z_i$ replaced by $q^{x_i}z_i$. This gives
\begin{gather*}R_{N+1}(Z;b_1,b_2,b_3,b_4)\\
\qquad{} =\frac{q^N\theta(q)}{\theta(q^{N+1})}\sum_{\substack{x_1,\dots,x_n\geq 0,\\x_1+\dots+x_n=N}}\sum_{\substack{y_1,\dots,y_n\geq 0,\\y_1+\dots+y_n=1}}
\frac{\Delta(zq^{x+y})}{\Delta(z)}\prod\limits_{1\leq i<j\leq n}q^{x_ix_j}(z_iz_j)_{x_i+x_j+y_i+y_j}\\
\qquad\quad{}\times\prod\limits_{i=1}^n
\frac{(qz_ib_1)_{x_i}\big(q^{x_i-N}z_ib_1\big)_{y_i}\prod\limits_{j=2}^4(z_ib_j)_{x_i+y_i}}{z_i^{x_i}(z_iq^{x_i})^{y_i}
\prod\limits_{j=1}^n(qz_i/z_j)_{x_i}\big(q^{1+x_i-x_j}z_i/z_j\big)_{y_i}}\\
\qquad{} =\frac{q^N\theta(q)}{\theta(q^{N+1})}\sum_{\substack{x_1,\dots,x_n\geq 0,\\x_1+\dots+x_n=N+1}}\sum_{\substack{y_1,\dots,y_n\geq 0,\\y_1+\dots+y_n=1}} \frac{\Delta(zq^{x})}{\Delta(z)}\prod\limits_{1\leq i<j\leq n}q^{x_ix_j-x_iy_j-x_jy_i}(z_iz_j)_{x_i+x_j}\\
\qquad\quad{}\times\prod\limits_{i=1}^n
\frac{(qz_ib_1)_{x_i-y_i}\big(q^{x_i-y_i-N}z_ib_1\big)_{y_i}\prod\limits_{j=2}^4(z_ib_j)_{x_i}}{z_i^{x_i}q^{(x_i-y_i)y_i}
\prod\limits_{j=1}^n(qz_i/z_j)_{x_i-y_i}\big(q^{1+x_i-x_j-y_i+y_j}z_i/z_j\big)_{y_i}},
\end{gather*}
where we replaced each $x_i$ by $x_i-y_i$ and used that $y_iy_j=0$ for $i\neq j$.

By elementary manipulations, using
\begin{gather*}\prod\limits_{i<j}q^{x_iy_j+x_jy_i}\prod\limits_i q^{(x_i-y_i)y_i}=q^{(x_1+\dots+x_n)(y_1+\dots+y_n)-(y_1^2+\dots+y_n^2)}=q^{(N+1)\cdot 1-1}=q^N, \end{gather*}
the expression above can be rewritten
\begin{gather*}R_{N+1}(Z;b_1,b_2,b_3,b_4)\\
=\frac{1}{\theta(q^{N+1})}\sum_{\substack{x_1,\dots,x_n\geq 0,\\x_1+\dots+x_n=N+1}}
\frac{\Delta(zq^{x})}{\Delta(z)}\prod\limits_{1\leq i<j\leq n}q^{x_ix_j}(z_iz_j)_{x_i+x_j}
\prod\limits_{i=1}^n\frac{(qz_ib_1)_{x_i}\prod\limits_{j=2}^4(z_ib_j)_{x_i}}{z_i^{x_i}\prod\limits_{j=1}^n(qz_i/z_j)_{x_i}}
\\
\qquad{} \times\sum_{\substack{y_1,\dots,y_n\geq 0,\\y_1+\dots+y_n=1}}\prod\limits_{i=1}^n
\frac{\big(q^{x_i-y_i-N}z_ib_1\big)_{y_i}\prod\limits_{j=1}^n\big(q^{1+x_i-y_i}z_i/z_j\big)_{y_i}}{\big(q^{1+x_i-y_i}z_ib_1\big)_{y_i}
\prod\limits_{j=1,\,j\neq i}^n\big(q^{x_i-x_j}z_i/z_j\big)_{y_i}}.
\end{gather*}
Writing $y_i=\delta_{ik}$, the inner sum takes the form
\begin{gather*}\sum_{k=1}^n\frac{\theta\big(q^{x_k-N-1}z_kb_1\big)\prod\limits_{j=1}^n\theta(q^{x_k}z_k/z_j)}{\theta(q^{x_k}z_kb_1)\prod\limits_{j=1,\,j\neq k}^n\theta(q^{x_k-x_j}z_k/z_j)}. \end{gather*}
By \eqref{tpf}, this can be evaluated as{\samepage
\begin{gather*}
\theta\big(q^{N+1}\big)\prod\limits_{i=1}^n\frac{\theta(z_ib_1)}{\theta(q^{x_i}z_ib_1)}=\theta\big(q^{N+1}\big)
\prod\limits_{i=1}^n\frac{(z_ib_1)_{x_i}}{(qz_ib_1)_{x_i}}\end{gather*}
and we arrive at \eqref{gri} with $N$ replaced by $N+1$. This completes the proof of Theorem~\ref{grt}.}

We will now rewrite \eqref{gri} in a way that hides some of its symmetry but makes it clear that it generalizes the Frenkel--Turaev summation \eqref{js}. To this end, we replace $n$ by $n+1$, $z_{n+1}$ by~$q^{-N}a^{-1}$ and eliminate $x_{n+1}$ from the summation. After routine simplif\/ication, we arrive at the following identity.

\begin{Corollary}\label{rsc}
Assuming $a^2q^{N+1}=b_1b_2b_3b_4z_1^2\dotsm z_n^2$,
\begin{gather}\sum_{\substack{x_1,\dots,x_n\geq 0,\\x_1+\dots+x_n\leq N}}
\frac{\Delta(zq^x)}{\Delta(z)}\prod\limits_{i=1}^n\frac{\theta(az_iq^{|x|+x_i})}{\theta(az_i)}
\frac{\prod\limits_{1\leq i<j\leq n}(z_iz_j)_{x_i+x_j}}{\prod\limits_{i=1}^n(aq/z_i)_{|x|-x_i}}
\frac{\big(q^{-N}\big)_{|x|}\prod\limits_{i=1}^n(az_i)_{|x|}}{\prod\limits_{j=1}^4(aq/b_j)_{|x|}} q^{|x|}\nonumber\\
 \qquad\quad{} \times\prod\limits_{i=1}^n
\frac{\prod\limits_{j=1}^4(z_ib_j)_{x_i}}{\big(aq^{N+1}z_i\big)_{x_i}\prod\limits_{j=1}^n(qz_i/z_j)_{x_i}}\nonumber\\
\qquad{} =\frac{\prod\limits_{j=1}^n(aqz_j)_N}{\big(aq/b_1,aq/b_2,aq/b_3,aq/b_1b_2b_3Z^2\big)_N\prod\limits_{j=1}^n(aq/z_j)_N}\nonumber\\
\qquad\quad{} \times\begin{cases}(aq/Z,aq/b_1b_2Z,aq/b_1b_3Z,aq/b_2b_3Z)_N,& n~\text{odd},\\[4mm]
(aq/b_1Z,aq/b_2Z,aq/b_3Z,aq/b_1b_2b_3Z)_N,& n~\text{even}.
\end{cases}\label{rci}
\end{gather}
\end{Corollary}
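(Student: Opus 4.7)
The plan is to deduce Corollary~\ref{rsc} from Theorem~\ref{grt} by specialising one variable. Concretely, I would apply~\eqref{gri} with $n$ replaced by $n+1$, set $z_{n+1}=q^{-N}/a$, and eliminate the summation index via $x_{n+1}=N-|x|$, where $|x|=x_1+\dots+x_n$. Under this substitution the balancing condition $q^{N-1}b_1\dotsm b_4z_1^2\dotsm z_{n+1}^2=1$ of the theorem becomes precisely $a^2q^{N+1}=b_1b_2b_3b_4Z^2$, the constraint $x_{n+1}\ge 0$ translates to $|x|\le N$, and the parities of $n$ and $n+1$ are opposite, so the two cases of~\eqref{gri} produce, respectively, the two cases of~\eqref{rci}.

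The bulk of the argument then consists of rewriting each factor of~\eqref{gri} after this substitution as the corresponding factor of~\eqref{rci} times $x$-independent pieces that are transferred to the right-hand side. For the $A$-type factor, the extra piece $\prod_{i=1}^n q^{x_i}\theta(q^{x_{n+1}-x_i}z_{n+1}/z_i)/\theta(z_{n+1}/z_i)$ collapses under $\theta(1/w)=-\theta(w)/w$ to the elliptic weight $\prod_{i=1}^n\theta(az_iq^{|x|+x_i})/\theta(az_i)$ displayed in~\eqref{rci}. The factors $(qz_i/z_{n+1})_{x_i}$ become $(aq^{N+1}z_i)_{x_i}$ directly. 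The remaining Pochhammer symbols $(qz_{n+1}/z_j)_{x_{n+1}}$, $(z_iz_{n+1})_{x_i+x_{n+1}}$, and $(z_{n+1}b_j)_{x_{n+1}}$ all contain $N-|x|$ in their length and are reshaped by repeated use of the reversal $(a)_{n-k}=(-1)^kq^{\binom{k}{2}}(q^{1-n}/a)^k(a)_n/(q^{1-n}/a)_k$, together with $(a)_{n+k}=(a)_n(aq^n)_k$ to split the quadratic Pochhammer symbols at length $N$. This produces the $x$-dependent denominators $\prod_j(aq/b_j)_{|x|}$ and $\prod_i(aq/z_i)_{|x|-x_i}$ required by~\eqref{rci}, and spawns the $x$-independent factors $\prod_j(aq/b_j)_N$, $\prod_i(aq/z_i)_N$ and $\prod_i(aqz_i)_N$ that are moved to the right-hand side.

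Finally, the right-hand side of~\eqref{gri} with its $Z$ replaced by $Zq^{-N}/a$ is normalised by applying the standard reversal of $(q^{-N}w)_N$ to each factor $(Zq^{-N}b_j/a)_N$ (or $(Zq^{-N}/a, Zq^{-N}b_1b_2/a,\ldots)_N$ in the even case), and the balancing condition $b_1b_2b_3b_4Z^2=a^2q^{N+1}$ is invoked to identify $(aq/(b_1b_2b_3Z^2))_N$ with the expression arising from the $b_4$ factor. The main obstacle, and the only real work, is the bookkeeping: one must check that all the signs, the powers of $q$ coming from the $q^{\binom{k}{2}}$ and $(q^{1-n}/a)^k$ prefactors of the reversals, and the monomial factors $z_i^{x_i}$ and $z_{n+1}^{x_{n+1}}=(q^{-N}/a)^{N-|x|}$ collapse to exactly the single $q^{|x|}$ weight and the prefactor in~\eqref{rci}. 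No new idea beyond Theorem~\ref{grt} and the elementary theta-function identities recorded in the preliminaries is required; this is the ``routine simplification'' referred to just before the statement.
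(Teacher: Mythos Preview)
Your proposal is correct and follows exactly the paper's approach: the paper states just before the Corollary that it replaces $n$ by $n+1$, sets $z_{n+1}=q^{-N}a^{-1}$, eliminates $x_{n+1}$, and performs ``routine simplification'', which is precisely the specialisation and bookkeeping you describe in detail.
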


\section{Applications}\label{ts}

The elliptic Bailey transformation \eqref{bat} can be derived from the elliptic Jackson summation~\eqref{js}. Similar arguments can be used
in multivariable situations, see, e.g.,~\cite{b,bs,mn} for the trigonometric and \cite{r} for the elliptic case.
We will use this method to derive a new multivariable elliptic Bailey transformation by combining the two
multivariable elliptic Jackson summations \eqref{rsi} and \eqref{rci}.

\begin{Theorem}\label{bt}
Suppose that $a^3q^{N+2}=bcdefgz_1^2\dotsm z_n^2$ and let $\lambda=a^2q/bcd$. Then,
\begin{gather}
\sum_{\substack{x_1,\dots,x_n\geq 0,\\x_1+\dots+x_n\leq N}}
\frac{\Delta(zq^x)}{\Delta(z)}\prod\limits_{i=1}^n\frac{\theta(az_iq^{|x|+x_i})}{\theta(az_i)}
\frac{\prod\limits_{1\leq i<j\leq n}(z_iz_j)_{x_i+x_j}}{\prod\limits_{i=1}^n(aq/z_i)_{|x|-x_i}}\nonumber\\
\qquad\quad{}\times\frac{\big(q^{-N},b\big)_{|x|}\prod\limits_{i=1}^n(az_i)_{|x|}}{(aq/c,aq/d,aq/e,aq/f,aq/g)_{|x|}} q^{|x|}\prod\limits_{i=1}^n
\frac{(cz_i,dz_i,ez_i,fz_i,gz_i)_{x_i}}{\big(aq^{N+1}z_i,aqz_i/b\big)_{x_i}\prod\limits_{j=1}^n(qz_i/z_j)_{x_i}}\nonumber\\
\qquad{} =\frac{z^N\prod\limits_{i=1}^n(aqz_i)_N}{(\lambda q,aq/e,aq/f,aq/g)_{N}\prod\limits_{i=1}^n(aq/z_i)_N}\nonumber\\
 \qquad\quad{} \times\sum_{\substack{x_1,\dots,x_n\geq 0,\\x_1+\dots+x_n\leq N}}
\frac{\Delta(zq^x)}{\Delta(z)}\frac{\theta\big(\lambda q^{2|x|}\big)}{\theta(\lambda)}
\frac{\prod\limits_{1\leq i<j\leq n}(z_iz_j)_{x_i+x_j}}{\prod\limits_{i=1}^n(\lambda b/az_i)_{|x|-x_i}}\nonumber\\
 \qquad\quad{} \times\frac{\big(\lambda,q^{-N},\lambda c/a,\lambda d/a\big)_{|x|}\prod\limits_{i=1}^n(\lambda b/az_i)_{|x|}}{\big(\lambda q^{N+1},aq/c,aq/d\big)_{|x|}}q^{|x|}\prod\limits_{i=1}^n\frac{\big(ez_i,fz_i,gz_i,q^{-N}z_i/a\big)_{x_i}}{(aqz_i/b)_{x_i}
 \prod\limits_{j=1}^n(qz_i/z_j)_{x_i}}\nonumber\\
 \qquad\quad{}\times\begin{cases}\displaystyle\left(\frac{a}\lambda\right)^N\frac{(aq/Z,\lambda q/eZ,\lambda q/fZ,\lambda q/gZ)_N}{(q^{-N}Z/a,\lambda q/eZ,\lambda q/fZ,\lambda q/gZ)_{|x|}}, & n~\text{odd},\\[4mm]
\displaystyle\frac{(\lambda q/Z,a q/eZ,a q/fZ,a q/gZ)_N}{(\lambda q/Z,\lambda q/efZ,\lambda q/egZ,\lambda q/fgZ)_{|x|}}, & n~\text{even}.
\end{cases}
\label{bi}\end{gather}
\end{Theorem}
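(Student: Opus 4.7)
My plan is to follow the standard pattern for deriving a Bailey-type transformation by combining two Jackson-type summations: construct an $n$-fold double sum $\sum_{x,y}F(x,y)$ that admits closed-form evaluation in two different orders, summing out $y$ first (via one of \eqref{rsi}, \eqref{rci}) to recover the LHS of \eqref{bi}, and summing out $x$ first (via the other) to recover the RHS. This is the $n$-variable analogue of the classical derivation of \eqref{bat} from \eqref{js}, and of the elliptic multivariable argument in \cite{r}.

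Concretely, I would start from the LHS of \eqref{bi} and isolate a block of factors built from $b$ together with a triple of column parameters (plausibly $(e,f,g)$, which also appear with the same structure on the RHS). Reading the corresponding instance of one of \eqref{rsi}, \eqref{rci} in reverse, I would replace this block by the explicit $n$-fold Jackson sum of which it is the closed form, introducing an auxiliary summation index~$y$. Interchanging the two summations should leave, for each fixed $y$, an inner sum over $x$ that is an instance of the other Jackson summation, with parameters shifted by $y$-factors. Summing out $x$ and performing elementary bookkeeping with the shifted-factorial identities of Section~2 should produce the single sum over $y$ appearing on the RHS of \eqref{bi}, with the parameters $(a,b,c,d)$ duly transformed into $(\lambda,\lambda b/a,\lambda c/a,\lambda d/a)$.

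The main technical obstacle is parameter matching. One must arrange that the balancings $a^2q^{|N|+1}=bcde$ of \eqref{rsi} and $a^2q^{N+1}=b_1\dotsm b_4Z^2$ of \eqref{rci} both hold after the $y$-substitution, and are consistent with the ambient balancing $a^3q^{N+2}=bcdefgZ^2$ and $\lambda=a^2q/bcd$ of \eqref{bi}. A useful sanity check is that only \eqref{rci} carries a parity-of-$n$ dependence in its closed form, and the RHS of \eqref{bi} inherits exactly this parity split; so \eqref{rci} must be the summation that feeds into the parity-dependent prefactor, while \eqref{rsi} is applied to evaluate the inner sum over the "free" variable. Once the double sum is correctly set up, the remainder is a long but mechanical calculation; the hardest bookkeeping will be to match exactly the column factors $(q^{-N}z_i/a)_{x_i}$ and the row factor $\theta(\lambda q^{2|x|})/\theta(\lambda)$ on the RHS, together with the case-dependent $(aq/Z,\lambda q/eZ,\lambda q/fZ,\lambda q/gZ)_N$ versus $(\lambda q/Z,aq/eZ,aq/fZ,aq/gZ)_N$ prefactors that are the shadow of the parity split in \eqref{rci}.
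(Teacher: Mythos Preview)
Your plan is exactly the paper's approach: expand a block of factors on the left of \eqref{bi} by reading \eqref{rsi} backwards to introduce an inner sum over $y$, interchange summations, and evaluate the resulting inner $x$-sum by Corollary~\ref{rsc} (i.e.\ \eqref{rci}), whose parity split is what produces the case-dependent prefactor on the right. One correction: the block to expand is built from $b$ together with the \emph{pair} $(c,d)$, not the triple $(e,f,g)$; note that $e,f,g$ appear identically on both sides of \eqref{bi} (so expanding them cannot generate a nontrivial transformation), while the parameters that actually change, $c\mapsto\lambda c/a$, $d\mapsto\lambda d/a$, are precisely the ones absorbed into the \eqref{rsi} instance via $(N_i,a,b,c,d,e)\mapsto(x_i,\lambda,\lambda c/a,\lambda d/a,\lambda b/a,aq^{|x|})$.
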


\begin{proof}
If we substitute
 \begin{gather*}(N_1,\dots,N_n,a,b,c,d,e)\mapsto
\big(x_1,\dots,x_n,\lambda,\lambda c/a,\lambda d/a,\lambda b/a,aq^{|x|}\big)\end{gather*}
 in \eqref{rsi}, the right-hand side takes the form
\begin{gather*}\frac{(\lambda q,b)_{|x|}}{(aq/c,aq/d)_{|x|}}\prod\limits_{i=1}^n\frac{(cz_i,dz_i)_{x_i}}{(aqz_i/b,az_i/\lambda)_{x_i}}. \end{gather*}
Thus, the left-hand side of \eqref{bi} can be expressed as
\begin{gather*}
\sum_{\substack{x_1,\dots,x_n\geq 0,\\x_1+\dots+x_n\leq N}}
\frac{\Delta(zq^x)}{\Delta(z)}\prod\limits_{i=1}^n\frac{\theta(az_iq^{|x|+x_i})}{\theta(az_i)}
\frac{\prod\limits_{1\leq i<j\leq n}(z_iz_j)_{x_i+x_j}}{\prod\limits_{i=1}^n(aq/z_i)_{|x|-x_i}}\\
\qquad{} \times\frac{\big(q^{-N}\big)_{|x|}\prod\limits_{i=1}^n(az_i)_{|x|}}{(\lambda q,aq/e,aq/f,aq/g)_{|x|}} q^{|x|}\prod\limits_{i=1}^n
\frac{(az_i/\lambda,ez_i,fz_i,gz_i)_{x_i}}{\big(aq^{N+1}z_i\big)_{x_i}\prod\limits_{j=1}^n(qz_i/z_j)_{x_i}}\\
\qquad{}\times
\sum_{y_1,\dots,y_n=0}^{x_1,\dots,x_n}\frac{\Delta(zq^y)}{\Delta(z)}\frac{\theta\big(\lambda q^{2|y|}\big)}{\theta(\lambda)}
\frac{(\lambda,\lambda c/a,\lambda d/a)_{|y|}\prod\limits_{i=1}^n(\lambda b/az_i)_{|y|}}{\big(aq/c,aq/d,\lambda q^{|x|+1}\big)_{|y|}\prod\limits_{i=1}^n\big(\lambda q^{1-x_i}/az_i\big)_{|y|}} q^{|y|}\\
\qquad{} \times\prod\limits_{i=1}^n\frac{(\lambda q/az_i)_{|y|-y_i}\big(aq^{|x|}z_i\big)_{y_i}\prod\limits_{j=1}^n(q^{-x_j}z_i/z_j)_{y_i}}{(\lambda b/az_i)_{|y|-y_i}(aqz_i/b)_{y_i}\prod\limits_{j=1}^n(qz_i/z_j)_{y_i}}.
\end{gather*}
We change the order of summation and replace the vector $x$ by $x+y$. Some elementary manipulation, using in particular~\eqref{etf}, gives
\begin{gather*}
\sum_{\substack{y_1,\dots,y_n\geq 0,\\y_1+\dots+y_n\leq N}}
\frac{\Delta(zq^y)}{\Delta(z)}\frac{\theta\big(\lambda q^{2|y|}\big)}{\theta(\lambda)}
\frac{\prod\limits_{1\leq i<j\leq n}q^{-y_iy_j}(z_iz_j)_{y_i+y_j}}{\prod\limits_{i=1}^n(aq/z_i,\lambda b/az_i)_{|y|-y_i}}\frac{\prod\limits_{i=1}^n(aqz_i)_{|y|+y_i}}{(\lambda q)_{2|y|}}\\
\qquad{} \times\frac{\big(q^{-N},\lambda,\lambda c/a,\lambda d/a\big)_{|y|}\prod\limits_{i=1}^n(\lambda b/az_i)_{|y|}}{(aq/c,aq/d,aq/e,aq/f,aq/g)_{|y|}}\left(\frac{aq}{\lambda}\right)^{|y|}
\prod\limits_{i=1}^n\frac{(ez_i,fz_i,gz_i)_{y_i}z_i^{y_i}}{\big(aq^{N+1}z_i,aqz_i/b\big)_{y_i}\prod\limits_{j=1}^n(qz_i/z_j)_{y_i}}\\
\qquad{} \times\sum_{\substack{x_1,\dots,x_n\geq 0,\\x_1+\dots+x_n\leq N-|y|}}
\frac{\Delta(zq^{y+x})}{\Delta(zq^y)}\prod\limits_{i=1}^n\frac{\theta\big(az_iq^{|y|+y_i+|x|+x_i}\big)}{\theta\big(az_iq^{|y|+y_i}\big)}\frac{\prod\limits_{1\leq i<j\leq n}\big(z_iz_jq^{y_i+y_j}\big)_{x_i+x_j}}{\prod\limits_{i=1}^n\big(aq^{|y|+1-y_i}/z_i\big)_{|x|-x_i}}\\
\qquad{} \times\frac{\big(q^{|y|-N}\big)_{|x|}\prod\limits_{i=1}^n\big(az_iq^{|y|+y_i}\big)_{|x|}}{\big(aq^{|y|+1}/e,aq^{|y|+1}/f,aq^{|y|+1}/g,\lambda q^{2|y|+1}\big)_{|x|}} q^{|x|}\\
\qquad{}\times\prod\limits_{i=1}^n\frac{\big(ez_iq^{y_i},fz_iq^{y_i},gz_iq^{y_i},q^{y_i-|y|}az_i/\lambda\big)_{x_i}}
{\big(aq^{N+1+y_i}z_i\big)_{x_i}\prod\limits_{j=1}^n\big(q^{1+y_i-y_j}z_i/z_j\big)_{x_i}}.
\end{gather*}
We observe that the inner sum is as in Corollary \ref{rsc}, with the substitutions
\begin{gather*}(z_1,\dots,z_n,N,a,b_1,b_2,b_3,b_4)\mapsto \big(z_1q^{y_1},\dots,z_nq^{y_n},N-|y|,aq^{|y|},e,f,g,q^{-|y|}a/\lambda\big). \end{gather*}
When $n$ is odd, the value of this sum can be rewritten
\begin{gather*}
\frac{(aq/Z,aq/efZ,aq/egZ,aq/fgZ)_{N-|y|}\prod\limits_{i=1}^n\big(aq^{|y|+1+y_i}z_i\big)_{N-|y|}}
{\big(aq^{|y|+1}/e,aq^{|y|+1}/f,aq^{|y|+1}/g,q^{-N-|y|}/\lambda\big)_{N-|y|}\prod\limits_{i=1}^n\big(aq^{|y|+1-y_i}/z_i\big)_{N-|y|}}\\
\qquad{} =z^N\left(\frac{a}{\lambda}\right)^{N-|y|}\frac{(aq/Z,\lambda q/eZ,\lambda q/fZ,\lambda q/gZ)_N\prod\limits_{i=1}^n(aqz_i)_N}{(\lambda q,aq/e,aq/f,aq/g)_{N}\prod\limits_{i=1}^n(aq/z_i)_N}\\
\qquad\quad{} \times\frac{q^{\sum\limits_{i<j}y_iy_j}(\lambda q)_{2|y|}(aq/e,aq/f,aq/g)_{|y|}\prod\limits_{i=1}^n(aq/z_i)_{|y|-y_i}\big(aq^{N+1}z_i,q^{-N}z_i/a\big)_{y_i}}{\big(\lambda q^{N+1},q^{-N}Z/a,\lambda q/eZ,\lambda q/fZ,\lambda q/gZ\big)_{|y|}\prod\limits_{i=1}^n z_i^{y_i}(aqz_i)_{|y|+y_i}},
\end{gather*}
which leads to the right-hand side of \eqref{bi}. The case of even $n$ is treated similarly.
\end{proof}

One may obtain further transformation formulas by iterating Theorem~\ref{bt}. We will only give one example, exploiting the fact that the left-hand side of \eqref{bi} is invariant under interchan\-ging~$c$ and~$e$. In the identity expressing the corresponding symmetry of the right-hand side, we make the substitutions $(\lambda,a,b,c,d)\mapsto(a,a^2q/bcd,aq/cd,aq/bd,aq/bc)$, keeping $e,f,g,z_1,\dots,z_n$ f\/ixed. This leads to another multivariable elliptic Bailey transformation.

\begin{Corollary}\label{bc}
Suppose that $a^3q^{N+2}=bcdefgz_1^2\dotsm z_n^2$ and let $\lambda=a^2q/bde$. Then,
\begin{gather*}
\sum_{\substack{x_1,\dots,x_n\geq 0,\\x_1+\dots+x_n\leq N}}
\frac{\Delta(zq^x)}{\Delta(z)}\frac{\theta\big(a q^{2|x|}\big)}{\theta(a)}
\frac{\prod\limits_{1\leq i<j\leq n}(z_iz_j)_{x_i+x_j}}{\prod\limits_{i=1}^n(b/z_i)_{|x|-x_i}}\\
\quad\times\frac{\big(a,q^{-N},c,d\big)_{|x|}\prod\limits_{i=1}^n(b/z_i)_{|x|}}{\big(a q^{N+1},aq/c,aq/d\big)_{|x|}}q^{|x|}\prod\limits_{i=1}^n\frac{\big(ez_i,fz_i,gz_i,aqz_i/efgZ^2\big)_{x_i}}{(aqz_i/b)_{x_i}\prod\limits_{j=1}^n(qz_i/z_j)_{x_i}}\\
\quad\times\begin{cases}\displaystyle\frac{1}{(a q/eZ,a q/fZ,a q/gZ,aq/efgZ)_{|x|}}, & n~\text{odd},\\[4mm]
\displaystyle\frac{1}{(aq/Z,a q/efZ,a q/egZ,a q/fgZ)_{|x|}}, & n~\text{even}\end{cases}\\
=\frac{(aq,\lambda q/c)_N}{(\lambda q,aq/c)_N}\sum_{\substack{x_1,\dots,x_n\geq 0,\\x_1+\dots+x_n\leq N}}
\frac{\Delta(zq^x)}{\Delta(z)}\frac{\theta\big(\lambda q^{2|x|}\big)}{\theta(\lambda)}
\frac{\prod\limits_{1\leq i<j\leq n}(z_iz_j)_{x_i+x_j}}{\prod\limits_{i=1}^n(\lambda b/az_i)_{|x|-x_i}}\\
\quad\times\frac{\big(\lambda,q^{-N},c,\lambda d/a\big)_{|x|}\prod\limits_{i=1}^n(\lambda b/az_i)_{|x|}}{\big(\lambda q^{N+1}, \lambda q/c,aq/d\big)_{|x|}}q^{|x|}\prod\limits_{i=1}^n\frac{\big(\lambda ez_i/a,fz_i,gz_i,aqz_i/efgZ^2\big)_{x_i}}{(aqz_i/b)_{x_i}\prod\limits_{j=1}^n(qz_i/z_j)_{x_i}}\\
\quad\times\begin{cases}\displaystyle\frac{(aq/cfZ,\lambda q/fZ)_N}{(aq/fZ,\lambda q/cfZ)_N(aq/eZ,\lambda q/fZ,\lambda q/gZ,aq/efgZ)_{|x|}}, & n~\text{odd},\\[4mm]
\displaystyle\frac{(a q/cZ,\lambda q/Z)_N}{(aq/Z,\lambda q/cZ)_N(\lambda q/Z,a q/efZ,a q/egZ,\lambda q/fgZ)_{|x|}}, & n~\text{even}.
\end{cases}
\end{gather*}
\end{Corollary}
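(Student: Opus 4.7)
The plan is to exploit an obvious symmetry of \eqref{bi} and then apply an involutive substitution. Inspecting the left-hand side of \eqref{bi}, one sees that the parameter $c$ enters only through the factor $(aq/c)_{|x|}$ in the denominator and the factors $(cz_i)_{x_i}$ in the product over $i$, and $e$ enters in exactly the same way. Thus the left-hand side is invariant under $c \leftrightarrow e$. Consequently, the right-hand side of \eqref{bi} must be invariant as well, although this is not apparent at the level of individual terms, since the parameter $\lambda = a^2q/bcd$ is changed to $\lambda' = a^2q/bed$ under the swap. Equating the two versions of the right-hand side therefore yields a nontrivial identity between two multivariable elliptic hypergeometric series, one indexed by $\lambda$ and one by $\lambda'$.

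Next, I would apply the involutive substitution
\begin{gather*}
(\lambda, a, b, c, d) \longmapsto \left(a,\; \frac{a^2 q}{bcd},\; \frac{aq}{cd},\; \frac{aq}{bd},\; \frac{aq}{bc}\right),
\end{gather*}
keeping $e, f, g, z_1, \dots, z_n, N$ fixed. In terms of new parameters $\tilde a, \tilde b, \tilde c, \tilde d, \tilde \lambda$, this amounts to $\tilde \lambda = a$, $\tilde a = \lambda$, $\tilde b = \lambda b/a$, $\tilde c = \lambda c/a$, $\tilde d = \lambda d/a$, which is the standard Bailey involution underlying \eqref{bat}. In particular, the relation $\tilde \lambda = \tilde a^2 q / (\tilde b \tilde c \tilde d)$ is preserved, and a direct check shows that the balancing condition $a^3 q^{N+2} = bcdefgZ^2$ also goes to itself.

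With these substitutions in place, the final task is a mechanical rewriting in which every Pochhammer symbol and $\theta$-factor on both sides of the symmetry identity is expressed in terms of the new $(a, b, c, d, \lambda)$. The two summation sides of the rewritten identity become precisely the two sides of Corollary \ref{bc}, with the overall prefactor $(aq/e)_N^{-1}$ inherited from \eqref{bi} turning into the claimed $(aq,\lambda q/c)_N/(\lambda q, aq/c)_N$, and the parity-dependent fractions in \eqref{bi} passing to the parity-dependent fractions displayed in the Corollary.

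The main obstacle is purely bookkeeping: one must track how the many $q$-Pochhammer symbols, the $\theta$-factors inside the sum, and especially the case-by-case prefactors distinguishing $n$ odd from $n$ even all transform under the composition of the $c \leftrightarrow e$ swap with the Bailey involution, and verify that the various $a \leftrightarrow \lambda$ exchanges combine to give exactly the formula claimed. No new identities beyond those already used in the proof of Theorem~\ref{bt} are required; the entire derivation is an elementary but lengthy manipulation of theta functions and Pochhammer symbols.
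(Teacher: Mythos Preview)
Your approach is exactly the one the paper uses: exploit the $c\leftrightarrow e$ symmetry of the left-hand side of \eqref{bi} to obtain a nontrivial identity between the two resulting right-hand sides, and then apply the Bailey involution $(\lambda,a,b,c,d)\mapsto(a,a^2q/bcd,aq/cd,aq/bd,aq/bc)$ with $e,f,g,z_1,\dots,z_n$ fixed. The only inaccuracy is your remark about ``the overall prefactor $(aq/e)_N^{-1}$'': the actual prefactor in \eqref{bi} involves several more $N$-dependent Pochhammer symbols (and parity-dependent pieces), all of which must be tracked through the substitution to produce the displayed prefactors in the Corollary.
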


Theorem \ref{bt} reduces to Corollary~\ref{rsc} when $aq=bc$. More interestingly, when $b=1$ the left-hand side of~\eqref{bi} reduces to $1$. After a change of parameters, this leads to the following new multivariable elliptic Jackson summation.

\begin{Corollary}\label{njc}
If $a^2q^{N+1}=bcdez_1^2\dotsm z_n^2$, then
\begin{gather*}
\sum_{\substack{x_1,\dots,x_n\geq 0\\x_1+\dots+x_n\leq N}}\frac{\Delta(zq^x)}{\Delta(z)}
\frac{\theta\big(aq^{2|x|}\big)}{\theta(a)}\frac{\prod\limits_{1\leq i<j\leq n}(z_iz_j)_{x_i+x_j}}{\prod\limits_{i=1}^n(e/z_i)_{|x|-x_i}}\frac{\big(a,q^{-N}\big)_{|x|}\prod\limits_{i=1}^n(e/z_i)_{|x|}}{\big(aq^{N+1}\big)_{|x|}} q^{|x|}\\
\qquad\quad{} \times\prod\limits_{i=1}^n\frac{\big(bz_i,cz_i,dz_i,q^{-N}ez_i/a\big)_{x_i}}{(aqz_i/e)_{x_i}\prod\limits_{j=1}^n(qz_i/z_j)_{x_i}}\cdot\begin{cases}
\displaystyle \frac 1{\big(q^{-N}eZ/a,aq/bZ,aq/cZ,aq/dZ\big)_{|x|}}, & n~\text{odd},\\[4mm]
\displaystyle \frac 1{(aq/Z,aq/bcZ,aq/bdZ,aq/cdZ)_{|x|}}, & n~\text{even}
\end{cases}\\
\qquad {} =\frac{(aq,aq/be,aq/ce,aq/de)_{N}\prod\limits_{i=1}^n(aq/ez_i)_N}{Z^N\prod\limits_{i=1}^n(aqz_i/e)_N}\\
\qquad\quad{}\times\begin{cases}
\displaystyle \frac {e^N}{(aq/bZ,aq/cZ,aq/dZ,aq/eZ)_{N}}, & n~\text{odd},\\[4mm]
\displaystyle \frac 1{(aq/Z,aq/beZ,aq/ceZ,aq/deZ)_{N}}, & n~\text{even}.
\end{cases}\end{gather*}
\end{Corollary}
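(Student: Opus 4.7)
The plan is to specialize Theorem~\ref{bt} at $b=1$ and then apply a change of parameters. The key observation is that the left-hand side of \eqref{bi} contains the factor $(b)_{|x|}$, which at $b=1$ becomes $(1)_{|x|}=\theta(1)\theta(q)\dotsm\theta(q^{|x|-1})$. Since $\theta(1)=0$, every term with $|x|\geq 1$ vanishes, so the sum collapses to its $x=0$ term, at which the summand is manifestly $1$. Hence the left-hand side of \eqref{bi} reduces identically to $1$, and Theorem~\ref{bt} becomes an explicit closed-form evaluation of the sum previously appearing on its right-hand side.

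To match this evaluation with Corollary~\ref{njc}, I would write the parameters of Theorem~\ref{bt} (with $b=1$) as $(a_0,c_0,d_0,e_0,f_0,g_0)$ with $\lambda=a_0^2q/c_0d_0$, and relate them to the parameters $(a,b,c,d,e)$ of Corollary~\ref{njc} via $a=\lambda$, $e=\lambda/a_0=a_0q/c_0d_0$, and $\{b,c,d\}=\{e_0,f_0,g_0\}$. A direct check shows that the balancing condition $a_0^3q^{N+2}=c_0d_0e_0f_0g_0z_1^2\dotsm z_n^2$ transforms into $a^2q^{N+1}=bcdez_1^2\dotsm z_n^2$. The crucial summand simplification is that $\lambda c_0/a_0=a_0q/d_0$ and $\lambda d_0/a_0=a_0q/c_0$, so the factors $(\lambda c_0/a_0,\lambda d_0/a_0)_{|x|}$ in the numerator of the right-hand side of \eqref{bi} cancel against $(a_0q/c_0,a_0q/d_0)_{|x|}$ in its denominator, leaving precisely the $(a,q^{-N})_{|x|}/(aq^{N+1})_{|x|}$ of Corollary~\ref{njc}. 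The remaining summand factors match after the substitutions $\lambda b/a_0z_i=\lambda/(a_0z_i)\mapsto e/z_i$, $a_0qz_i\mapsto aqz_i/e$, and $q^{-N}z_i/a_0\mapsto q^{-N}ez_i/a$.

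The final step is to verify that the $N$-dependent prefactor matches. Isolating the sum in the specialization of \eqref{bi} expresses its value as the reciprocal of $\frac{Z^N\prod_i(a_0qz_i)_N}{(\lambda q,a_0q/e_0,a_0q/f_0,a_0q/g_0)_N\prod_i(a_0q/z_i)_N}$ times the $N$-dependent part of the case factor. Under the relabeling above, the $n$-odd case factor $(a_0/\lambda)^N(a_0q/Z,\lambda q/e_0Z,\lambda q/f_0Z,\lambda q/g_0Z)_N$ becomes $e^{-N}(aq/eZ,aq/bZ,aq/cZ,aq/dZ)_N$, and one verifies directly that the reciprocal of the whole prefactor reproduces the right-hand side of Corollary~\ref{njc} for odd $n$; the even-$n$ case is analogous, with the case factor $(\lambda q/Z,a_0q/e_0Z,a_0q/f_0Z,a_0q/g_0Z)_N$ transforming into $(aq/Z,aq/beZ,aq/ceZ,aq/deZ)_N$ after noting that $a_0q/e_0Z=aq/(beZ)$, etc. The main obstacle is simply the bookkeeping involved in handling the two cases in parallel, but once the summand simplification $\lambda c_0/a_0=a_0q/d_0$ is in hand the verification reduces to mechanical Pochhammer manipulations.
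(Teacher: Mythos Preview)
Your approach is correct and is exactly the one indicated in the paper: specialize Theorem~\ref{bt} at $b=1$, so that the factor $(b)_{|x|}=(1)_{|x|}$ kills every term with $|x|\geq 1$ and the left-hand side of~\eqref{bi} collapses to $1$, then relabel $(a_0,c_0,d_0,e_0,f_0,g_0)$ as you describe. Your identification $a=\lambda$, $e=\lambda/a_0$, $\{b,c,d\}=\{e_0,f_0,g_0\}$ and the key cancellation $(\lambda c_0/a_0,\lambda d_0/a_0)_{|x|}=(a_0q/d_0,a_0q/c_0)_{|x|}$ are precisely what is needed, and the prefactor bookkeeping you outline checks out in both parities.
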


\looseness=-1 Examining the proof of Theorem \ref{bt}, we see that Corollary~\ref{njc} is obtained by combi\-ning~\eqref{rci} with the special case $aq=bc$ of~\eqref{rsi}, when the right-hand side is equal to $\prod\limits_{i=1}^n\delta_{N_i,0}$. The latter identity can be viewed as a matrix inversion~\cite{rs}, so Corollary~\ref{njc} is an inverted version of Corollary~\ref{rsc}, just as~\eqref{rsi} is an inverted version of the standard $A$-type summation~\cite[Corollary~5.2]{r}.

If we let $\lambda d/a=1$ in Corollary~\ref{bc}, we obtain yet another multivariable elliptic Jackson summation. After a change of parameters, it takes the form
\begin{gather}
\sum_{\substack{x_1,\dots,x_n\geq 0\\x_1+\dots+x_n\leq N}}\frac{\Delta(zq^x)}{\Delta(z)}
\frac{\theta\big(aq^{2|x|}\big)}{\theta(a)}\frac{\prod\limits_{1\leq i<j\leq n}(z_iz_j)_{x_i+x_j}}{\prod\limits_{i=1}^n(t/z_i)_{|x|-x_i}}
\frac{\big(a,q^{-N},b,c\big)_{|x|}\prod\limits_{i=1}^n(t/z_i)_{|x|}}{(aq^{N+1},aq/b,aq/c)_{|x|}} q^{|x|}\nonumber\\
\qquad\quad{}\times\prod\limits_{i=1}^n\frac{\big(dz_i,ez_i,tz_i/deZ^2\big)_{x_i}}{\prod\limits_{j=1}^n(qz_i/z_j)_{x_i}}\cdot
\begin{cases}
\displaystyle\frac{1}{(aq/dZ,aq/eZ,t/Z,t/deZ)_{|x|}}, & n~\text{odd},\\[4mm]
\displaystyle\frac{1}{(aq/Z,aq/deZ,t/dZ,t/eZ)_{|x|}}, & n~\text{even}
\end{cases}\nonumber\\
\qquad{} =\begin{cases}
\displaystyle\frac{(aq,aq/bc,aq/bdZ,aq/cdZ)_N}{(aq/b,aq/c,aq/dZ,aq/bcdZ)_N}, & n~\text{odd},\\[4mm]
\displaystyle\frac{(aq,aq/bc,aq/bZ,aq/cZ)_N}{(aq/b,aq/c,aq/Z,aq/bcZ)_N}, & n~\text{even},
\end{cases}\label{jts}
\end{gather}
valid for $a^2q^{N+1}=bcde z_1^2\dotsm z_n^2$ and $t$ arbitrary. This identity is less novel than Corollary~\ref{njc}, as it can be deduced from Theorem~\ref{grt} in a more direct manner. Indeed, writing the sum as
\begin{gather*}\sum_{k=0}^N\sum_{\substack{x_1,\dots,x_n\geq 0\\x_1+\dots+x_n= k}}(\dotsm), \end{gather*}
the inner sum is computed by Theorem~\ref{grt} and the outer sum by~\eqref{js}. In fact, the same proof gives the following more general result, which reduces to~\eqref{jts} when $(d,e)=(fZ,gZ)$ or $(Z,fgZ)$ if $n$ is odd or even, respectively.

\begin{Corollary}
For parameters subject to $a^2q^{N+1}=bcde$, $fghz_1^2\dotsm z_n^2=t$,
\begin{gather*}
\sum_{\substack{x_1,\dots,x_n\geq 0\\x_1+\dots+x_n\leq N}}\frac{\Delta(zq^x)}{\Delta(z)}
\frac{\theta\big(aq^{2|x|}\big)}{\theta(a)}{}
\frac{\prod\limits_{1\leq i<j\leq n}(z_iz_j)_{x_i+x_j}\big(a,q^{-N},b,c,d,e\big)_{|x|}}{\big(aq^{N+1},aq/b,aq/c,aq/d,aq/e\big)_{|x|}}\,q^{|x|}\\
\qquad\quad{} \times\prod\limits_{i=1}^n\frac{(t/z_i)_{|x|}(fz_i,gz_i,hz_i)_{x_i}}{(t/z_i)_{|x|-x_i}\prod\limits_{j=1}^n(qz_i/z_j)_{x_i}}\cdot
\begin{cases}
\displaystyle\frac{1}{(fZ,gZ,hZ,t/Z)_{|x|}}, & n~\text{odd},\\[4mm]
\displaystyle\frac{1}{(Z,fgZ,fhZ,ghZ)_{|x|}}, & n~\text{even}
\end{cases}\\
\qquad{} =\frac{(aq,aq/bc,aq/bd,aq/cd)_N}{(aq/b,aq/c,aq/d,aq/bcd)_N}.
\end{gather*}
\end{Corollary}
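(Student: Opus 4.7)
The plan is to mirror the argument sketched for~\eqref{jts}: split the multisum according to the value of $k=|x|$, evaluate the resulting inner sum by Theorem~\ref{grt}, and finish with the elliptic Jackson summation~\eqref{js}. Concretely, I would write
\begin{gather*}
\sum_{\substack{x_1,\dots,x_n\geq 0\\x_1+\dots+x_n\leq N}}(\dotsm)=\sum_{k=0}^N\,\sum_{\substack{x_1,\dots,x_n\geq 0\\x_1+\dots+x_n=k}}(\dotsm),
\end{gather*}
pull out of the inner sum everything depending only on $k$ (namely $\theta(aq^{2k})/\theta(a)$, the shifted factorials in $a,q^{-N},b,c,d,e$, the factor $q^k$, and the odd/even denominator), and use the elementary $(t/z_i)_k/(t/z_i)_{k-x_i}=(tq^{k-x_i}/z_i)_{x_i}$ to absorb the remaining $t$-dependent piece into the sum.

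The key step is to apply the inversion $(uq^{-m})_m=(-u)^m q^{-\binom{m+1}{2}}(q/u)_m$ with $u=tq^k/z_i$ and $m=x_i$, which gives
\begin{gather*}
\prod_{i=1}^n(tq^{k-x_i}/z_i)_{x_i}=(-1)^k(tq^k)^k q^{-\binom{k+1}{2}}\prod_{1\leq i<j\leq n}q^{x_ix_j}\prod_{i=1}^n\frac{(q^{1-k}z_i/t)_{x_i}}{z_i^{x_i}},
\end{gather*}
using $\sum_i\binom{x_i+1}{2}=\binom{k+1}{2}-\sum_{1\leq i<j\leq n}x_ix_j$ on the exponent of $q$. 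This \emph{injects} precisely the factors $\prod_{1\leq i<j\leq n}q^{x_ix_j}$ and $\prod_i z_i^{-x_i}$ needed to match the left-hand side of~\eqref{gri}. The inner sum then coincides with Theorem~\ref{grt} after the substitutions $N\mapsto k$ and $(b_1,b_2,b_3,b_4)\mapsto(f,g,h,q^{1-k}/t)$, whose balancing condition $q^{k-1}\cdot fgh\cdot q^{1-k}/t\cdot Z^2=fghZ^2/t$ equals $1$ by the hypothesis $fghz_1^2\dotsm z_n^2=t$.

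Theorem~\ref{grt} then returns the value $(Zf,Zg,Zh,Zq^{1-k}/t)_k/(Z^k(q)_k)$ for $n$ odd and $(Z,Zfg,Zfh,Zfq^{1-k}/t)_k/((Zf)^k(q)_k)$ for $n$ even. I would simplify the extra symbol $(Zq^{1-k}/t)_k$ (resp.\ $(Zfq^{1-k}/t)_k$) via $(q^{1-k}/a)_k=(-a)^{-k}q^{-\binom{k}{2}}(a)_k$, using $t/(Zf)=ghZ$ in the even case. All stray powers of $t$, $Z$, and $q$ collapse---the $q$-powers because $\binom{k+1}{2}+\binom{k}{2}=k^2$---and the outcome cancels exactly against the odd/even denominator in the statement, leaving simply $1/(q)_k$. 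The remaining one-variable outer sum
\begin{gather*}
\sum_{k=0}^N\frac{\theta(aq^{2k})}{\theta(a)}\frac{(a,q^{-N},b,c,d,e)_k\,q^k}{(q,aq^{N+1},aq/b,aq/c,aq/d,aq/e)_k}
\end{gather*}
is precisely~\eqref{js} under the balancing $a^2q^{N+1}=bcde$, which produces the claimed right-hand side.

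The main obstacle, in my view, is the bookkeeping in the second and third steps: one must verify that the signs and the powers of $t$, $Z$, and $q$, together with the extra theta symbol $(Zq^{1-k}/t)_k$ (resp.\ $(Zfq^{1-k}/t)_k$), collapse cleanly against the odd/even factor of the statement, and that this works uniformly in both parities of $n$. Each individual manipulation is elementary, but the assembly must be organized with care.
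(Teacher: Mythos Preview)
Your proposal is correct and follows precisely the route the paper takes: decompose the multisum as $\sum_{k=0}^N\sum_{|x|=k}$, evaluate the inner sum by Theorem~\ref{grt} (with $N\mapsto k$ and $(b_1,b_2,b_3,b_4)=(f,g,h,q^{1-k}/t)$), and apply the Frenkel--Turaev summation~\eqref{js} to the resulting one-variable sum. The paper states this argument in a single sentence and leaves the bookkeeping you describe---the inversion of $(tq^{k-x_i}/z_i)_{x_i}$, the cancellation of the parity-dependent denominator, and the collapse of the stray powers---to the reader; your proposal simply makes these steps explicit, and the details you give check out.
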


\subsection*{Acknowledgements}
This research is supported by the Swedish Science Research Council (Vetenskapsr{\aa}det). I~would like to thank the anonymous referee for a very careful reading of the manuscript, leading to many improvements.

\pdfbookmark[1]{References}{ref}
\LastPageEnding

\end{document}